\documentclass[preprint,11pt]{elsarticle}
\usepackage[utf8]{inputenc}
\usepackage[T1]{fontenc}
\usepackage{amsmath,bm}
\usepackage{amsfonts}
\usepackage{amssymb,amsthm,multirow,booktabs}
\usepackage{makeidx,color}
\usepackage{graphicx,mathrsfs}
\usepackage[a4paper, total={7in, 9in}]{geometry}

\newtheorem{thm}{Theorem}[section]
\newtheorem{lem}[thm]{Lemma}

\ifpdf
\DeclareGraphicsExtensions{.eps,.pdf,.png,.jpg}
\else
\DeclareGraphicsExtensions{.eps}
\fi

\usepackage{amssymb}
\usepackage{amsmath}

\journal{Applied Mathematics Letters}

\begin{document}

\begin{frontmatter}



\title{Strong Stability Preserving Integrating Factor Runge-Kutta Methods for Differential Lyapunov Equations with Positivity Preservation} 


		\author[ouc]{Wenhui Zhang}
        \ead{zwh3513@stu.ouc.edu.cn}
\author[ouc,lmm]{Rihui Lan\corref{cor}}
\ead{lanrihui@ouc.edu.cn}
\author[seu]{Xiaobo Jing}
\ead{xiaobo@seu.edu.cn}
\cortext[cor]{Corresponding author}

\affiliation[ouc]{organization={School of Mathematical Sciences, Ocean University of China},
            city={Qingdao, Shandong},
            postcode={266100}, 
            country={China}}
\affiliation[lmm]{organization={Laboratory of Marine Mathematics, Ocean University of China},
	city={Qingdao, Shandong},
	postcode={266100}, 
	country={China}}
\affiliation[seu]{organization={School of Mathematics, Southeast University},
	city={Nanjing, Jiangsu},
	postcode={210096}, 
	country={China}}
\begin{abstract}
This paper introduces second- and third-order integrating factor strong stability preserving Runge-Kutta   methods for solving differential Lyapunov equations. The proposed schemes break 
the traditional order barrier while rigorously preserving the symmetry and positive semidefiniteness (SPSD) of numerical solutions—an essential property for stability analysis and control-theoretic applications. Furthermore, we provide a rigorous error estimate for the second-order scheme. Numerical experiments validate the accuracy of the methods and their ability to maintain SPSD properties.
\end{abstract}



\begin{keyword}
Integrating factor Runge-Kutta \sep Positivity preservation \sep Differential Lyapunov equation \sep Matrix equation \sep Allen-Cahn equation


\end{keyword}

\end{frontmatter}


\vspace{-3.5ex}
	\section{Introduction}
    \vspace{-1.5ex}
In this paper, we consider the following differential Lyapunov equation (DLE): 
\begin{equation}\label{dle:ori}
	\begin{aligned}
		&\dot{X}(t)=A X(t)+X(t) A^\top+Q(t), \quad X(t_0)=X_0,
	\end{aligned}
\end{equation}
where  $X(t)\in \mathbb R^{n\times n}$ is the unknown, $X_0\in \mathbb R^{n\times n}$ is the initial value satisfying symmetric postive semidefiniteness.  The coefficient matrix $A\in \mathbb R^{n\times n}$. $Q(t)\in \mathbb R^{n\times n}$ is symmetric positive semidefinite (SPSD), which can depend on $X(t)$.  For convinience, we write $Q\geq 0$ if $Q$ is SPSD. 

DLEs play a pivotal role in control theory \cite{abou2003matrix}, dynamical systems \cite{schmidt2023rank}, and numerical methods for partial differential equations (PDEs) \cite{massei2018solving}.
In control theory, DLEs are fundamental for stability analysis and robustness verification of linear and nonlinear systems. For example, \cite{schmidt2023rank} employed the Kalman filter for inference in high-dimensional dynamical systems, where the process-noise covariance matrix is derived from solutions of DLEs, ensuring accurate state estimation.
Beyond control, DLEs also arise in PDE discretizations, particularly when reformulating problems in matrix form rather than vector form. Such approach can significantly reduce the dimensionality of the coefficient matrix, which is particularly advantageous when combined with exponential integrator methods, offering computational efficiency for large-scale systems.

%
Recently, the numerical solutions of DLEs gain much attention. For stationary DLEs, the review paper \cite{simoncini2016computational} provides several numerical methods. For non-stationary DLEs, \cite{li2021exponential} considered some Rosenbrock type exponential integrators methods for Lyapunov and Ricatti differential equations. However, they did not take SPSD preservation (also referred to as positivity preservation) into their consideration. In \cite{gillis2013positive}, Gillis and Diehl proposed PDPLD to preserve the positiveness of Lyapunov equation. However, PDPLD is an implicit nonlinear scheme in the framework of optimization. 
It is challenging to design the high-order SPSD preservation schemes, because the direct methods bear an order barrier as stated in \cite[Theorem 2.3]{dieci1994positive}. It is stated that only the first-order direct scheme is SPSD for Lyapunov and Riccati differential equations,  which suffers from the effect of general matrix $A$. Alternatively, \cite{dieci1994positive} proposed Hamiltonian system with symplectic Runge-Kutta method and the linearized mid-point scheme to achieve higher order scheme. However, the first approach is computationally cost, and the second one is limited to second-order accuracy. 

This paper presents an investigation into high-order SPSD-preserving time integration schemes for DLEs. Time marching is achieved via the integrating factor strong stability preserving Runge-Kutta (IF-SSPRK) methods  \cite{ahmed2019high}, which  leverages the strong stability-preserving (SSP) property \cite{gottlieb1998total}. The IF-SSPRK methods represent an advancement over conventional Runge-Kutta (RK) techniques, specifically tailored for systems exhibiting linear stiffness. Its fundamental strategy employs an exponential integrating factor to absorb the stiff linear term, permitting the application of standard RK methods to the resultant system. 

The rest of paper is structured as follows. Section \ref{sec:scheme} presents the second and third order IF-SSPRK schemes. Their numerical analysis is conducted in Section \ref{sec:numa} and the numerical experiments are performed in Section \ref{sec:numexp}. Lastly, some conclusion is addressed in Section \ref{sec:con}.


\section{Integrating-factor strong stability-preserving Runge Kutta methods for DLE}\label{sec:scheme}	
Given by the generalized variation-of-constants formula, we can have the exact formula for DLEs, which is stated in Lemma  \ref{sol:for}.
\begin{lem}\label{sol:for}\cite{abou2003matrix}
	The unique solution of the general Lyapunov differential equation
	$$
	\dot{X}(t)=A(t) X+X A(t)^\top+Q(t), \quad X\left(t_0\right)=X_0
	$$
	is defined by
	$$
	X(t)\textstyle =\Phi_A\left(t, t_0\right) X_0 \Phi_A^\top\left(t, t_0\right)+\int_{t_0}^t \Phi_A(t, s) Q(s) \Phi_A^\top(t, s) d s
	$$
	where the transition matrix $\Phi_A\left(t, t_0\right)$ is the unique solution to the problem
	$$
	\dot{\Phi}_A\left(t, t_0\right)=A(t) \Phi_A\left(t, t_0\right), \quad \Phi_A\left(t_0, t_0\right)=I .
	$$
	Futhermore, if $A$ is assumed to be a constant matrix, then we have
	\begin{equation}\label{sol}
		X(t) \textstyle =e^{\left(t-t_0\right) A} X_0 e^{\left(t-t_0\right) A^\top}+\int_{t_0}^t e^{(t-s) A} Q(s) e^{(t-s) A^\top} d s.
	\end{equation}
\end{lem}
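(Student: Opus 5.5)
The plan is to prove existence by verifying the proposed formula directly, and uniqueness from linearity. I assume $A(\cdot)$ and $Q(\cdot)$ are continuous on the interval of interest. Then the transition matrix $\Phi_A(t,s)$ exists, is unique, and is $C^1$ in both arguments by standard linear ODE theory. I will use three properties of it:
\[
\partial_t \Phi_A(t,s)=A(t)\Phi_A(t,s),\qquad \Phi_A(t,t)=I,\qquad \Phi_A(t,s)\Phi_A(s,r)=\Phi_A(t,r).
\]
The semigroup identity follows from uniqueness: both sides solve the same ODE in $t$ with the same value at $t=s$.

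\textbf{Existence.} Define
\[
Y(t)=\Phi_A(t,t_0)X_0\Phi_A^\top(t,t_0)+\int_{t_0}^t \Phi_A(t,s)Q(s)\Phi_A^\top(t,s)\,ds .
\]
At $t=t_0$ the integral vanishes and $\Phi_A(t_0,t_0)=I$, so $Y(t_0)=X_0$. For the first term, the product rule gives
\[
A(t)\,\Phi_A X_0\Phi_A^\top+\Phi_A X_0\Phi_A^\top\,A(t)^\top .
\]
For the integral I use the Leibniz rule. The boundary term is $\Phi_A(t,t)Q(t)\Phi_A^\top(t,t)=Q(t)$. Differentiating under the integral gives $A(t)(\cdot)+(\cdot)A(t)^\top$ applied to the integrand. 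Adding everything yields $\dot Y=A(t)Y+YA(t)^\top+Q(t)$. Differentiation under the integral sign is justified by continuity of $\partial_t\Phi_A$ on the compact set $\{t_0\le s\le t\}$.

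\textbf{Uniqueness.} Let $X_1,X_2$ be two solutions and set $Z=X_1-X_2$. Then $Z$ solves $\dot Z=A(t)Z+ZA(t)^\top$ with $Z(t_0)=0$. This is a linear ODE on $\mathbb R^{n\times n}\cong\mathbb R^{n^2}$ whose coefficient operator $Z\mapsto A(t)Z+ZA(t)^\top$ is continuous in $t$. Picard–Lindelöf, or Grönwall applied to $\|Z(t)\|\le 2\int_{t_0}^t\|A(s)\|\|Z(s)\|\,ds$, then gives $Z\equiv0$.

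\textbf{Constant case.} When $A$ is constant, $\Phi_A(t,s)=e^{(t-s)A}$, since this matrix satisfies the defining ODE with the right initial value. Transposing gives $\Phi_A^\top(t,s)=e^{(t-s)A^\top}$. Substituting these into the general formula yields \eqref{sol}.

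The only genuinely delicate point is justifying the differentiation of the parameter-dependent integral and the regularity of $\Phi_A$ in both variables. I would handle it with the standard continuity and differentiability-in-parameters result for linear ODEs. Everything else is direct verification. Note that if $Q$ depends on $X$, the formula becomes an implicit (variation-of-constants) identity rather than an explicit solution, so I would state the lemma for given $Q(t)$.
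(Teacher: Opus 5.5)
Your proof is correct. The direct verification works under your continuity assumptions: the product rule handles the first term, and the Leibniz rule handles the integral, with boundary term $\Phi_A(t,t)Q(t)\Phi_A^\top(t,t)=Q(t)$. The uniqueness argument through the homogeneous equation for $Z=X_1-X_2$ also goes through, as does the identification $\Phi_A(t,s)=e^{(t-s)A}$ in the constant case. Your remark that for $X$-dependent $Q$ the formula is only an implicit identity is apt.

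The paper gives no proof of its own; it simply cites \cite{abou2003matrix}. The only point of comparison is the integrating-factor computation right after the lemma, which produces \eqref{ifrk:ode}--\eqref{ode:if}. Turned into a proof, that idea is a slightly leaner alternative to yours:
\begin{itemize}
\item Uniqueness and representation: for any solution $X$, put $W(t)=\Phi_A(t_0,t)X(t)\Phi_A^\top(t_0,t)$. Since $\partial_t\Phi_A(t_0,t)=-\Phi_A(t_0,t)A(t)$, the $A$-terms cancel and $\dot W=\Phi_A(t_0,t)Q(t)\Phi_A^\top(t_0,t)$. So $W$, and hence $X$, is forced by the fundamental theorem of calculus. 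The semigroup identity $\Phi_A(t,t_0)\Phi_A(t_0,s)=\Phi_A(t,s)$ then turns this into the stated formula.
\item Existence: define $W$ by that integral and set $X:=\Phi_A(t,t_0)W(t)\Phi_A^\top(t,t_0)$. The product rule shows $X$ solves the DLE.
\end{itemize}

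This route gets existence, uniqueness and the representation in essentially one computation. Because the semigroup identity pulls the $t$-dependence out of the integrand, it never differentiates a parameter-dependent integral, which is exactly the step you singled out as delicate.

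The cost is that it needs the semigroup identity and differentiability of $\Phi_A$ in its second argument. You listed the semigroup identity but never actually used it. Your route needs only $\partial_t\Phi_A(t,s)=A(t)\Phi_A(t,s)$, plus a separate standard uniqueness argument.
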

Thanks to Lemma \ref{sol:for}, we can first multiply Eq. \eqref{sol}  by $e^{-t A}$ on the left and by $e^{-t A^\top}$ on the right, then take the derivative on $t$ to have
\begin{equation}\label{ifrk:ode}
	\textstyle 	\frac{d e^{-t A} X(t)  e^{-t A^\top}}{dt} = e^{-t A} Q(t) e^{-t A^\top}.
\end{equation}
Let $W(t) = e^{-t A} X(t)  e^{-t A^\top}$,  we have
\begin{equation}\label{ode:if}
	\begin{aligned}
		\textstyle \frac{dW}{dt} & \textstyle = e^{-t A}  Q(t) e^{-t A^\top}:=N(t).
	\end{aligned}
\end{equation}
Based	on Eq. \eqref{ode:if}, we can propose the second and third order strong stability-preserving Runge-Kutta (SSPRK) methods \cite{isherwood2018strong}. SSPRK schemes is constructed by the convex combination of  different time steps. Hence, they can gain the better stability.   
\paragraph{IF-SSPRK2}  Let $\varphi^n$ be the value at $t=t_n$. Then, the second-order integrating factor strong stability-preserving Runge–Kutta (IF-SSPRK2) scheme for solving \eqref{ifrk:ode}  is given as 
\begin{equation}\label{ifrk2}
	\left\{
	\begin{aligned}
		\widehat{W}^{n+1} & = W^n + \tau N^n,\\
		\widehat{W}^{n+2}& = \widehat{W}^{n+1} + \tau N^{n+1},\\
		W^{n+1} & \textstyle= \frac12 W^n + \frac12 \widehat{W}^{n+2},
	\end{aligned}
	\right.  
	\Rightarrow
	\left\{
	\begin{aligned}
		\widehat{X}^{n+1} &= e^{\tau A}X^n e^{\tau A^\top} + \tau  e^{\tau A}Q^n e^{\tau A^\top},\\
		\widehat{X}^{n+2}&= e^{\tau A}\widehat{X}^{n+1} e^{\tau A^\top} + \tau  e^{\tau A}Q^{n+1} e^{\tau A^\top},\\
		X^{n+1}& \textstyle= \frac12 e^{\tau A}X^{n} e^{\tau A^\top} +  \frac12 e^{-\tau A}\widehat{X}^{n+2} e^{-\tau A^\top}.
	\end{aligned}
	\right.
\end{equation}

\paragraph{IF-SSPRK3} The third-order integrating factor strong stability-preserving Runge–Kutta (IF-SSPRK3) scheme for solving \eqref{ifrk:ode} is given as 
\begin{equation}\label{ifrk3}
	\left\{
	\begin{aligned}
		\widehat{W}^{n+1} & = W^n + \tau N^n,\\
		\widehat{W}^{n+2}& = \widehat{W}^{n+1} + \tau N^{n+1},\\
		\widehat{W}^{n+\frac12} & \textstyle= \frac34 W^n + \frac14 \widehat{W}^{n+2},\\
		\widehat{W}^{n+\frac32} & = \widehat{W}^{n+\frac12} + \tau N^{n+\frac12},\\
		W^{n+1} & \textstyle= \frac13 W^n + \frac23\widehat{W}^{n+\frac32},
	\end{aligned}
	\right.  
	\Rightarrow
	\left\{
	\begin{aligned}
		\widehat{X}^{n+1} &= e^{\tau A}X^n e^{\tau A^\top} + \tau  e^{\tau A}Q^n e^{\tau A^\top},\\
		\widehat{X}^{n+2}&= e^{\tau A}\widehat{X}^{n+1} e^{\tau A^\top} + \tau  e^{\tau A}Q^{n+1} e^{\tau A^\top},\\
		\widehat{X}^{n+\frac12}&\textstyle=\frac34  e^{\frac12\tau A}X^{n} e^{\frac12\tau A^\top} +\frac14  e^{-\frac32\tau A}\widehat{X}^{n+2} e^{-\frac32\tau A^\top} ,\\
		\widehat{X}^{n+\frac32}&= e^{\tau A}\widehat{X}^{n+\frac12} e^{\tau A^\top} + \tau  e^{\tau A}Q^{n+\frac12} e^{\tau A^\top},\\
		X^{n+1}& \textstyle= \frac13  e^{\tau A}X^{n} e^{\tau A^\top} + \frac23e^{-\frac12\tau A}\widehat{X}^{n+\frac32} e^{-\frac12\tau A^\top}.
	\end{aligned}
	\right.
\end{equation}
\vspace{-4.5ex}
\section{SPSD preservation and error estimate}\label{sec:numa}
\vspace{-1.5ex}
For simplicity, we will aim at IF-SSPRK2 scheme. The analysis can be easily extended to IF-SSPRK3. First of all, we will show that IF-SSPRK2 scheme preserves the symmetry and positive definiteness.  As stated in \cite[Theorem 2.3]{dieci1994positive}, matrix $A$ will violate the SPSD preservation for the direct higher order scheme. 
However, IF-SSPRKs have $e^{-t A}$ and $e^{-t A^\top} $ lying in the two sides of intermediate quantities, so that $A$ will not ruin SPSD preservation. 
\begin{thm}
	Assume that $X^0$ is SPSD. Then, for any time step $\tau>0$ and $n\geq 0$, 
	the solution $X^{n+1}$ to \eqref{ifrk2} is SPSD.
\end{thm}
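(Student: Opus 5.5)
The plan is to prove the statement by induction on $n$, using only two elementary facts about SPSD matrices. First, congruence preserves SPSD: if $M\geq 0$ and $P\in\mathbb R^{n\times n}$ is arbitrary, then $PMP^\top$ is symmetric and $v^\top PMP^\top v=(P^\top v)^\top M (P^\top v)\geq 0$ for every $v$, so $PMP^\top\geq 0$. Second, the SPSD matrices form a convex cone, so nonnegative combinations of SPSD matrices are SPSD. We will apply the congruence fact with $P=e^{\pm\tau A}$, noting that $(e^{\tau A})^\top=e^{\tau A^\top}$.

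For the induction, assume $X^n\geq 0$; the base case is the hypothesis $X^0\geq 0$. We also use the standing assumption of the introduction that $Q(t)\geq 0$ at every time, in particular $Q^n\geq 0$ and $Q^{n+1}\geq 0$. If $Q$ depends on $X$, we interpret $Q^n=Q(t_n,X^n)$ and $Q^{n+1}=Q(t_{n+1},\widehat{X}^{n+1})$, and we assume $Q$ is SPSD whenever its argument is SPSD.

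The stages of \eqref{ifrk2} are then handled in order.
\begin{itemize}
\item $\widehat{X}^{n+1}=e^{\tau A}(X^n+\tau Q^n)e^{\tau A^\top}$. Since $\tau>0$, $X^n+\tau Q^n\geq 0$, and congruence gives $\widehat{X}^{n+1}\geq 0$.
\item $\widehat{X}^{n+2}=e^{\tau A}(\widehat{X}^{n+1}+\tau Q^{n+1})e^{\tau A^\top}$. This is SPSD by the same argument; in the state-dependent case we first need $\widehat{X}^{n+1}\geq 0$ to conclude $Q^{n+1}\geq 0$.
\item $X^{n+1}=\tfrac12\, e^{\tau A}X^n e^{\tau A^\top}+\tfrac12\, e^{-\tau A}\widehat{X}^{n+2}e^{-\tau A^\top}$. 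Each term is a congruence of an SPSD matrix, and the combination has nonnegative weights, so $X^{n+1}\geq 0$.
\end{itemize}
This closes the induction, and the argument places no restriction on $\tau>0$.

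The only step needing care is the final combination, because it contains the backward factor $e^{-\tau A}$. This factor is harmless because congruence by any matrix preserves SPSD; no contractivity or invertibility property of $e^{\tau A}$ is needed. This is exactly where the scheme avoids the order barrier of \cite[Theorem 2.3]{dieci1994positive}: $A$ never appears additively, only through congruences. The same reasoning extends directly to \eqref{ifrk3}, since its combination weights $\tfrac34,\tfrac14,\tfrac13,\tfrac23$ are also nonnegative.
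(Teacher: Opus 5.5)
Your proof is correct and follows the same route as the paper's one-line argument: induction on $n$, using $X^0\geq 0$, $Q\geq 0$, and nonnegative (convex) combinations of stage values. You also spell out the congruence fact $PMP^\top\geq 0$ for $P=e^{\pm\tau A}$, which the paper leaves implicit, and your treatment of state-dependent $Q$ (assuming $Q$ is SPSD on SPSD arguments) is consistent with the paper's standing hypothesis.
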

\begin{proof}
	Given $X^0$ and $Q(t)$ are SPSD, it is easy to see that $X^{n+1}$ is SPSD by the mathematical induction and convex combination between two different stage values.
\end{proof}
Next, we will show that IF-SSPRK2 scheme is indeed second-order accurate. For simplicit, we will just consider the case $Q(t)$ is independent of $X(t)$. Let  $\Vert \cdot \Vert$ be any vector norm and the induced matrix norm is defined as $\textstyle \Vert B \Vert = \max\limits_{\Vert \bm v\Vert=1}\Vert B\bm v\Vert$. 
In addition, the logarithmic norm $\mu(B)$ of $B\in \mathbb{C}^{n\times n}$ is defined as $\mu(B):=\{\max \operatorname{Re} z: z \in \mathscr{F}(B)\}$, where $\mathscr{F}(B)=\left\{x^* B x: x \in \mathbb{C}^d,\|x\|=1\right\}$.
Assume there exists a positive constant $\mathcal{C}>0$ independent of $\tau$, such that $\mu(A) \leq \mathcal{C}$. Then, we have the following estimate on $e^{sA}$.
\begin{lem}\cite{abou2003matrix}
	For any matrix $A\in \mathbb{R}^{n\times n}$ and any constant $s\geq0$, there holds
	$$\Vert e^{s A} \Vert \leq e^{s \mu(A)} \leq e^{s \mathcal{C}}.$$
\end{lem}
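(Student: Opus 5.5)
The bound $\Vert e^{sA}\Vert \le e^{s\mu(A)}$ is the classical Dahlquist estimate. I would prove it with a differential inequality for $\phi(s) := \Vert e^{sA}\bm v\Vert$, where $\bm v$ is a fixed vector with $\Vert \bm v\Vert = 1$. The second inequality, $e^{s\mu(A)}\le e^{s\mathcal{C}}$, is then immediate: $s\ge 0$, the exponential is monotone, and $\mu(A)\le\mathcal{C}$ by assumption.

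Throughout, $\Vert\cdot\Vert$ is understood as the Euclidean norm. This is implicit in the paper's definition of $\mu$ through the numerical range $\mathscr{F}(B)$ with $x^*Bx$. Put $u(s) = e^{sA}\bm v$, so that $\dot u = Au$. Then
\begin{equation*}
\frac{d}{ds}\Vert u(s)\Vert^2 = u^*(A+A^\top)u = 2\operatorname{Re}\bigl(u^*Au\bigr).
\end{equation*}
For $u\neq 0$, normalizing $x = u/\Vert u\Vert$ gives $x^*Ax \in \mathscr{F}(A)$, hence $\operatorname{Re}(u^*Au) \le \mu(A)\Vert u\Vert^2$. If $u(s)=0$ the bound holds trivially. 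We therefore get
\begin{equation*}
\frac{d}{ds}\Vert u(s)\Vert^2 \le 2\mu(A)\Vert u(s)\Vert^2 .
\end{equation*}

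Next I apply Grönwall's inequality. Multiplying by $e^{-2s\mu(A)}$ shows that $e^{-2s\mu(A)}\Vert u(s)\Vert^2$ is nonincreasing. Since $\Vert u(0)\Vert^2 = 1$, this yields $\Vert e^{sA}\bm v\Vert \le e^{s\mu(A)}$ for all $s\ge0$. Taking the maximum over unit vectors $\bm v$ and using the definition of the induced norm gives $\Vert e^{sA}\Vert \le e^{s\mu(A)}$.

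The only delicate point is the norm. For a general vector norm the numerical-range definition of $\mu$ is not the right object. One would instead use $\mu(A) = \lim_{h\to0^+}(\Vert I+hA\Vert-1)/h$ together with the product formula $e^{sA} = \lim_{m\to\infty}(I+\tfrac{s}{m}A)^m$. Since $\Vert I+\tfrac{s}{m}A\Vert \le 1+\tfrac{s}{m}\mu(A)+o(1/m)$, taking the limit gives the same bound. I would present the Euclidean argument above as the main proof and note this alternative route for general induced norms.
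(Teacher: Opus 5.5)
Your proof is correct. The paper gives no argument of its own for this lemma; it only cites \cite{abou2003matrix}. Your energy estimate $\frac{d}{ds}\Vert u(s)\Vert^2 = 2\operatorname{Re}(u^*Au) \le 2\mu(A)\Vert u(s)\Vert^2$, followed by Gr\"onwall, is the standard Dahlquist proof that the citation stands in for. The second inequality is, as you say, just monotonicity of the exponential for $s\ge 0$ together with the standing assumption $\mu(A)\le\mathcal{C}$.

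Your restriction to the Euclidean norm is not merely a convenience; it is necessary. The paper says ``any vector norm'' but defines $\mu$ through $\mathscr{F}(A)$ with $\Vert x\Vert=1$ in that norm, and with that reading the bound can fail. Take $\Vert x\Vert := 2\Vert x\Vert_2$. The induced matrix norm is still the spectral norm, so $\Vert e^{sI}\Vert = e^{s}$. But $\mathscr{F}(I)=\{1/4\}$, so $e^{s\mu(I)} = e^{s/4} < e^{s}$ for $s>0$.

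Your alternative route via $\mu(A)=\lim_{h\to0^+}(\Vert I+hA\Vert-1)/h$ and $e^{sA}=\lim_{m\to\infty}(I+\tfrac{s}{m}A)^m$ is the right fix for general induced norms. It replaces the numerical-range quantity by the norm-dependent logarithmic norm, which coincides with the paper's $\mu$ in the Euclidean case.
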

\begin{thm}
	Assume $\tau\leq 1$ and $\textstyle \max\limits_{t\in [0, T]} \max \left\{ \Vert Q(t)\Vert, \left\Vert \frac{\partial Q(t)}{\partial t}\right\Vert, \left\Vert  \frac{\partial^2 Q(t)}{\partial t^2} \right\Vert \right\}\leq \mathcal{M}$. Given $X(t_{n+1})$ is the solution to DLE \eqref{dle:ori}. The solution $X^{n+1}$ to \eqref{ifrk2} has the following estimate,
	\begin{equation}\label{ifrk2:es}
		\Vert X(t_{n+1}) - X^{n+1}\Vert \leq \exp(2\mathcal{C}e^{2\mathcal{C}}t_{n+1})  \frac{2\mu^2 + 6\mu + 1}{2}e^{2\mu}\mathcal{M} t_{n+1}\tau^2.
	\end{equation}
\end{thm}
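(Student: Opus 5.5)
We propose to carry out the argument in the transformed variable $W(t)=e^{-tA}X(t)e^{-tA^\top}$ of \eqref{ode:if}, where the scheme \eqref{ifrk2} is exactly Heun's (SSPRK2) method applied to the non-autonomous ODE $\dot W=N(t)$, and then transfer the bound back to $X$. Since $Q$ is independent of $X$, $N(t)=e^{-tA}Q(t)e^{-tA^\top}$ does not depend on the unknown. A direct single-step computation gives
\begin{equation*}
W^{n+1}=W^n+\tfrac{\tau}{2}\bigl(N(t_n)+N(t_{n+1})\bigr).
\end{equation*}
This is the trapezoidal rule. There is therefore no stability amplification of earlier errors in $W$: the global error in $W$ is just the sum of local quadrature errors.

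\textbf{Rather than working in $W$ with $e^{-tA}$, which may grow like $e^{t\|A\|}$ and not be controlled by $\mu(A)$,} we localize the integrating factor at each step. Set $E^n=X(t_n)-X^n$. From Lemma \ref{sol:for},
\begin{equation*}
X(t_{n+1})=e^{\tau A}X(t_n)e^{\tau A^\top}+\int_{0}^{\tau}e^{(\tau-s)A}Q(t_n+s)e^{(\tau-s)A^\top}ds.
\end{equation*}
From the scheme,
\begin{equation*}
X^{n+1}=e^{\tau A}X^ne^{\tau A^\top}+\tfrac{\tau}{2}\bigl(e^{\tau A}Q^ne^{\tau A^\top}+Q^{n+1}\bigr).
\end{equation*}
Subtracting yields $E^{n+1}=e^{\tau A}E^ne^{\tau A^\top}+R^n$, where $R^n$ is the trapezoidal-rule error for $g(s)=e^{(\tau-s)A}Q(t_n+s)e^{(\tau-s)A^\top}$ on $[0,\tau]$.

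\textbf{The key step, and the main obstacle, is bounding $R^n$ via $\|R^n\|\le\frac{\tau^3}{12}\max\|g''\|$.} This requires controlling $g''$, which involves $A^2e^{(\tau-s)A}Qe^{(\tau-s)A^\top}$, $Ae^{(\tau-s)A}Q'e^{(\tau-s)A^\top}$, $e^{(\tau-s)A}Q''e^{(\tau-s)A^\top}$ and their transposed analogues. The exponentials are bounded by $e^{2\mathcal{C}\tau}\le e^{2\mathcal{C}}$ using the preceding lemma and $\tau\le1$. The powers of $A$ must be absorbed by a norm quantity; we interpret the $\mu$ appearing in \eqref{ifrk2:es} as the bound on $\|A\|$ (or on $\tau\|A\|$-type factors). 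Collecting the terms, $\|A^2Q\|+2\|AQA^\top\|+\|QA^{\top2}\|\le4\mu^2\mathcal M$, the $Q'$ terms give $4\mu\mathcal M$, and $Q''$ gives $\mathcal M$. Together these give a local error of the form $c(2\mu^2+6\mu+1)e^{2\mu}\mathcal M\tau^3$. We would allow slack in the constants: a looser but simpler bound via a Taylor remainder, rather than the sharp $1/12$, produces the factor $\tfrac12(2\mu^2+6\mu+1)e^{2\mu}$ with $e^{2\mu}$ coming from $\|e^{(\tau-s)A}\|\le e^{\tau\|A\|}$ bounds.

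\textbf{Finally we iterate the recursion.} Using $\|e^{\tau A}E^ne^{\tau A^\top}\|\le e^{2\mathcal C\tau}\|E^n\|$ and $E^0=0$, summation gives
\begin{equation*}
\|E^{n+1}\|\le\sum_{k=0}^{n}e^{2\mathcal C(n-k)\tau}\|R^k\|\le (n+1)\,e^{2\mathcal C t_{n+1}}\max_k\|R^k\|.
\end{equation*}
Writing $(n+1)\tau=t_{n+1}$ converts $\tau^3$ into $t_{n+1}\tau^2$. The weaker factor $\exp(2\mathcal Ce^{2\mathcal C}t_{n+1})$ in the statement is what a discrete Gronwall argument produces if one bounds $e^{2\mathcal C\tau}\le1+2\mathcal Ce^{2\mathcal C}\tau$ (valid for $\tau\le1$) and uses $(1+a\tau)^n\le e^{an\tau}$. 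That argument reproduces \eqref{ifrk2:es}.
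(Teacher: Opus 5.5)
Your proposal is correct and is essentially the paper's proof. The paper likewise collapses the three stages into $E^{n+1}=e^{\tau A}E^ne^{\tau A^\top}+R^n$, with $R^n$ the trapezoidal-rule error for $\int_{t_n}^{t_{n+1}}e^{(t_{n+1}-s)A}Q(s)e^{(t_{n+1}-s)A^\top}\,ds$ (reached via the residuals $\mathcal{R}_1,\mathcal{R}_2,\mathcal{R}_3$ rather than directly from the scheme), bounds $R^n=O(\tau^3)$, and finishes with $e^{2\mathcal{C}\tau}\le1+2\mathcal{C}e^{2\mathcal{C}}\tau$ and discrete Gr\"onwall. No appeal to slack in the constant is needed: reading $\mu\ge\|A\|$ (which the $A^2$ terms force), your sharp bound $\tfrac{\tau^3}{12}(4\mu^2+4\mu+1)e^{2\mu}\mathcal{M}$ is already below $\tfrac12(2\mu^2+6\mu+1)e^{2\mu}\mathcal{M}\tau^3$, and your direct iteration gives the factor $e^{2\mathcal{C}t_{n+1}}\le\exp(2\mathcal{C}e^{2\mathcal{C}}t_{n+1})$, so the stated estimate follows.
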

\begin{proof}
	First of all, we have
	\begin{equation}\label{reform}
		\left\{
		\begin{aligned}
			X(t_{n+1}) &= e^{\tau A}X(t_n) e^{\tau A^\top} + \tau  e^{\tau A}Q^n e^{\tau A^\top} + \mathcal{R}_1,\\
			X(t_{n+2})  &= e^{\tau A}X(t_{n+1})  e^{\tau A^\top} + \tau  e^{\tau A}Q^{n+1} e^{\tau A^\top} + \mathcal{R}_2,\\
			X(t_{n+1})  & \textstyle= \frac12 e^{\tau A} X(t_{n})  e^{\tau A^\top} +  \frac12 e^{-\tau A} X(t_{n+2})  e^{-\tau A^\top} + \mathcal{R}_3,
		\end{aligned}
		\right.
	\end{equation}
	where
	\begin{equation*}
		\begin{aligned}
			\mathcal{R}_1 & \textstyle = \int_{t_n}^{t_{n+1}} \left( e^{(t_{n+1}-s) A} Q(s) e^{(t_{n+1}-s) A^\top}  - e^{\tau A}Q(t_n) e^{\tau A^\top}\right)d s,\\
			\mathcal{R}_2 & \textstyle = \int_{t_{n+1}}^{t_{n+2}} \left( e^{(t_{n+2}-s) A} Q(s) e^{(t_{n+2}-s) A^\top}  - e^{\tau A}Q(t_{n+1}) e^{\tau A^\top}\right)d s,
		\end{aligned}
	\end{equation*}
	and
	\begin{equation*}
		\begin{aligned}
			\mathcal{R}_3 =&  \textstyle \frac12 \int_{t_n}^{t_{n+1}}  e^{(t_{n+1}-s) A} Q(s) e^{(t_{n+1}-s) A^\top}   d s  -  \textstyle \frac12 \int_{t_{n+1}}^{t_{n+2}}  e^{(t_{n+1}-s) A} Q(s) e^{(t_{n+1}-s) A^\top}  d s.
		\end{aligned}
	\end{equation*}
	Define the errors $E^{n+1} = X(t_{n+1}) - X^{n+1}$ and $\widehat{E}^{n+1} = X(t_{n+1}) - \widehat{X}^{n+1}$, then we have the following error equations by substracting \eqref{ifrk2}  from \eqref{reform},
	\begin{equation*}
		\left\{
		\begin{aligned}
			\widehat{E}^{n+1} &= e^{\tau A} E^n e^{\tau A^\top} + \mathcal{R}_1,\\
			\widehat{E}^{n+2} &= e^{\tau A}	\widehat{E}^{n+1}  e^{\tau A^\top} + \mathcal{R}_2,\\
			E^{n+1} & \textstyle= \frac12 e^{\tau A} E^{n}   e^{\tau A^\top} +  \frac12 e^{-\tau A} \widehat{E}^{n+2}  e^{-\tau A^\top} + \mathcal{R}_3.
		\end{aligned}
		\right.
	\end{equation*}
	The error equations can be further reduced as one single equation, that is 
	\begin{equation*}
		\begin{aligned}
			\textstyle	E^{n+1} & \textstyle = ~ e^{\tau A} E^n e^{\tau A^\top} + \frac12 \mathcal{R}_1 + \frac12 e^{-\tau A} \mathcal{R}_2  e^{-\tau A^\top} + \mathcal{R}_3\\
			& \textstyle = ~ e^{\tau A} E^n e^{\tau A^\top} + \int_{t_n}^{t_{n+1}}  e^{(t_{n+1}-s) A} Q(s) e^{(t_{n+1}-s) A^\top}   d s - \frac12 \tau e^{\tau A} Q(t_n) e^{\tau A^\top} - \frac12 Q(t_{n+1}).
		\end{aligned}
	\end{equation*}
	By $e^{2\tau\mathcal{C}}\leq 1+2\mathcal{C}e^{2\mathcal{C}} \tau$, the trapezoidal rule, and the triangle inequality, there holds
	\begin{equation*}
		\Vert E^{n+1} \Vert \leq (1+2\mathcal{C}e^{2\mathcal{C}} \tau) \Vert E^n \Vert + \frac{2\mu^2 + 6\mu + 1}{2}e^{2\mu\tau}\mathcal{M}\tau^3.
	\end{equation*}
	Changing index $n$ to $i$ and summing over from $i=0$ to $i=n$, it yields
	\begin{equation*}
		\Vert E^{n+1} \Vert\leq  2\mathcal{C}e^{2\mathcal{C}}  \sum\limits_{i=1}^n \tau \Vert E^i \Vert + \frac{2\mu^2 + 6\mu + 1}{2}e^{2\mu}\mathcal{M} t_{n+1}\tau^2,
	\end{equation*}
	where  $\Vert E^0 \Vert=0$ is applied. Lastly, by Gr\"onwall's inequality, we have the estimate \eqref{ifrk2:es}.
\end{proof}
\section{Numerical Experiments}\label{sec:numexp}
In this section, we will consider two numerical experiments: one is for the Allen-Cahn (AC) equations and its modification; the other one comes from a linear quadratic regulator (LQR) problem for a convection-diffusion system.
The matrix exponential function is computed by Matlab built-in function $expm$.
\vspace{-2ex}
\subsection{Convergent tests}
AC equation is given as:
$
	\frac{\partial u}{\partial t} = \varepsilon^2\Delta u + u - u^3, \text{ in } \Omega\times(0,1],
$
where $\Omega = (-1,1)\times(-1,1)$, $\varepsilon=0.01$ is the interface width parameter. We consider the homogeneous Dirichlet boundary condition here. The spatial discretization is by the central difference method, where $h_x=h_y=0.01$ and $A$ is the resulting discrete matrix relating to $\varepsilon^2\partial_{xx}$.  $U=(u_{i,j})_{i,j=1}^{200}$ is the discrete solution, where $u_{i,j}$ corresponds to the solution at $(x_i, y_j)$. Then, the spatial semi-discretization of AC equation is 
\begin{equation}\label{hadam}
	\frac{\partial U}{\partial t} = A U +  UA^\top + U - U^{\circ 3},
\end{equation}
where $U^{\circ 3}:=(u_{i,j}^3)_{i,j=1}^{200}$ is the Hadamard operator. The initial value is chosen as follows: we first randomly generated the orthogonal matrix $P$ and a random diagonal matrix $S$, where first diagonal element of $S$ is 0 and other diagonal elements are between 0 and 1. Eventually, the initial value is picked by $U^0=PSP^\top$. We compute the solution of $\tau=2^{-10}$ generated by IF-SSPRK3 as our reference solution. The errors produced by IF-SSPRK2 and IF-SSPRK3 are present in Table \ref{convergence}, where we can observe the exact second- and third-order accuracy for IF-SSPRK2 and IF-SSPRK3, respectively.
\begin{table}[!bht]
	\centering\footnotesize
	\begin{tabular}{c|cc|cc}
		\hline
		\multirow{2}{*}{$\tau$}& \multicolumn{2}{c|}{IF-SSPRK2}& \multicolumn{2}{c}{IF-SSPRK3}\\\cmidrule(lr){2-3}\cmidrule(lr){4-5}
		&Error &Rate &Error &Rate \\ \hline
		$2^{-3}$  &6.98E-3&-     &3.68E-4&-\\
		$2^{-4}$  &1.80E-3&1.96&4.61E-5&3.00\\
		$2^{-5}$  &4.52E-4&1.99&5.71E-6&3.01\\
		$2^{-6}$  &1.13E-4&2.00 &7.09E-7&3.01\\
		\hline
	\end{tabular}
    \vspace{-0.5ex}
	\caption{Convergence rates for AC equation produced by the  IF-SSPRK2 and IF-SSPRK3 schemes.}\label{convergence}
\end{table}

Since $U-U^{\circ 3}$ is generally not SPSD, we modify the Hadamard operator in \eqref{hadam} by the matrix powers to validate the SPSD preservation. That is
\begin{equation}\label{pow}
	\frac{\partial U}{\partial t} = A U +  UA^\top + U - U^3,
\end{equation}
where $U-U^3\geq 0$ when the eigenvalues of $U$ lie in $[0,1)$. Due to the round-off error in floating-point arithmetic across computational stages, we choose $U^0+\text{3E-15}\Vert U^0\Vert_FI$ as our initial value, where $U^0$ is generated by the same way mentioned as above. For comparison, we also test the problem with the regular SSPRK2 and SSPRK3. Choosing $T=1$ and $\tau=2^{-7}$, the evolution of the smallest eigenvalue produced by these four numerical schemes are present in Figure \ref{fig:eig}. It is observed both IF-SSPRK2 and IF-SSPRK3 can keep the smallest eigenvalues non-negative, however the regular SSPRK2 and SSPRK3 have the negative eigenvalues. 

      \begin{figure}[htbp]
	\centering
	\includegraphics[width=0.3\textwidth]{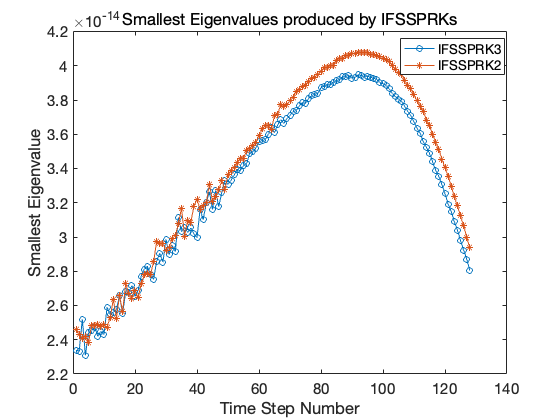}
	\includegraphics[width=0.3\textwidth]{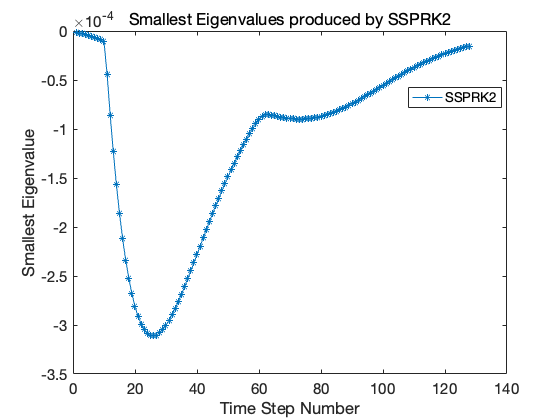}
	\includegraphics[width=0.3\textwidth]{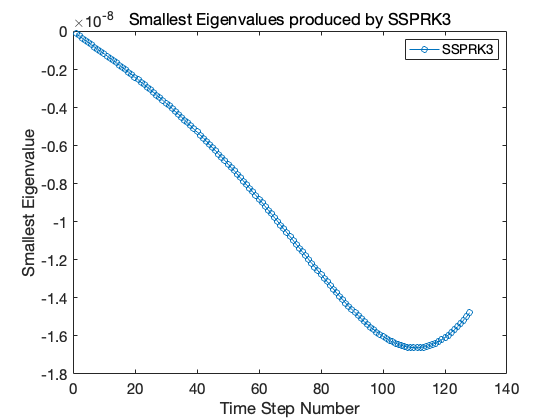}
    \vspace{-0.5ex}
	\caption{The evolution of smallest eigenvalue produced by IF-SSPRK (left), SSPRK2 (mid), and SSPRK3 (right).}
	\label{fig:eig}
\end{figure}
\vspace{-3.5ex}
\subsection{SPSD preservation with nonsymmetric matrix $A$}
In this subsection, we consider one DLE arise from a  LQR problem for a convection-diffusion system \cite{chen2025low}, where the matrix $A$ is obtained by the centered difference scheme on the equation $\frac{\partial w}{\partial t}=\Delta w-10x \frac{\partial w}{\partial x}-y \frac{\partial w}{\partial y}$, which is defined on  $\Omega= (0,1)^2$ with homogenous Dirichlet boundary conditions. 
In addition, the matrix $Q=\bm C \bm C^\top + \text{3E-15}\Vert \bm C \bm C^\top \Vert_F I$ where $\bm C\in \mathbb R^{d\times 1}$ with  $\bm C_j = 1$ if $x_j\in (0.7,0.9]$, otherwise $\bm C_j=0$. The time step $\tau=2^{-7}$ and the spatial mesh size $h_x=h_y=\frac{1}{21}$, so $A\in \mathbb R^{200\times 200}$. The smallest eigenvalues generated by IF-SSPRKs and regular SSPRKs are present in Figure \ref{fig:eig:2}. As we can see that, IF-SSPRKs can preserve SPSD pretty well. However, SSPRK schemes have the negative eigenvalues show-up.
      \begin{figure}[htbp]
	\centering
	\includegraphics[width=0.3\textwidth]{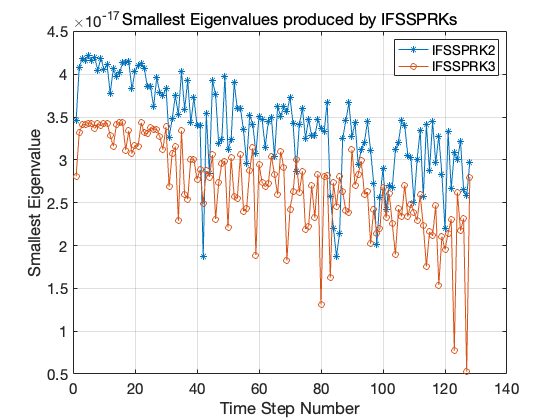}
	\includegraphics[width=0.3\textwidth]{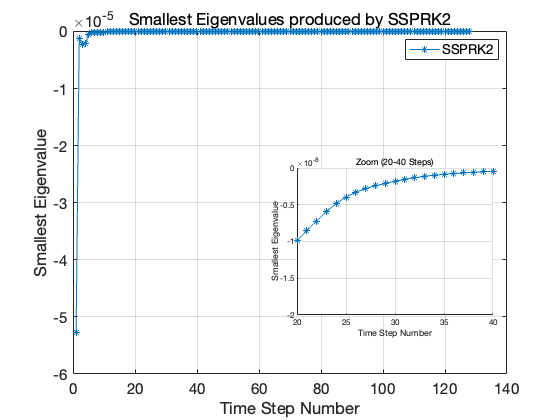}
	\includegraphics[width=0.3\textwidth]{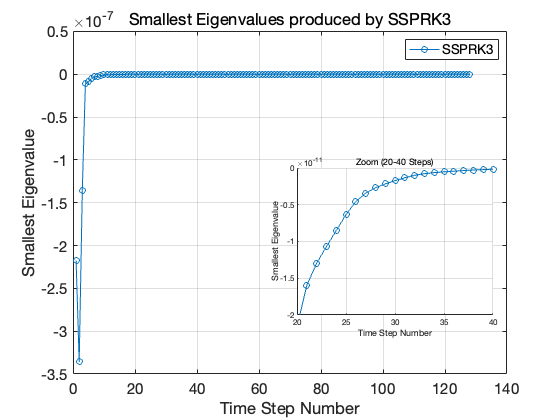}
    \vspace{-0.5ex}
	\caption{The evolution of smallest eigenvalues produced by IF-SSPRKs (left), SSPRK2 (mid), and SSPRK3 (right).}
	\label{fig:eig:2}
\end{figure}
\section{Conclusions}
\label{sec:con}
In this paper, we proposed the higher order SPSD-preserving numerical schemes for DLEs via the integrator factor strong stability preserving Runge-Kutta methods. We also provided the error estimates on the proposed schemes. Since the large scale scenario often occurs in the real engineering, we would leave the low-rank approximation as our future study.

\bibliographystyle{abbrv}
\bibliography{paraexpDRE}
\end{document}